\documentclass{article}
\usepackage{graphicx}
\usepackage{color}
\newcommand{\cR}{\mathcal{R}}
\newcommand{\R}{\mathbf{R}}
\newcommand{\N}{\mathbf{N}}
\usepackage{amsmath,amsthm}



\newtheorem{lemma}{Lemma}

\title{A note on numerical singular values of compositions with non-compact operators}

\author{Daniel Gerth\footnote{Technische Universit\"at Chemnitz,
Fakult\"at f\"ur Mathematik, D-09107 Chemnitz, Germany, ({\tt daniel.gerth@mathematik.tu-chemnitz.de})}}


\begin{document}

\maketitle

\begin{abstract}
Linear non-compact operators are difficult to study because they do not exist in the finite dimensional world. Recently, Math\'{e} and Hofmann studied the singular values of the compact composition of the non-compact Hausdorff moment operator and the compact integral operator and found credible arguments, but no strict proof, that those singular values fall only slightly faster than those of the integral operator alone. However, the fact that numerically the singular values of the combined operator fall exponentially fast was not mentioned. In this note, we provide the missing numerical results and provide an explanation why the two seemingly contradicting results may both be true.
\end{abstract}



\section{Introduction} 
For numerical computations one often needs to find a finite dimensional approximation to a real world problem that can be modelled as an infinite dimensional operator equation. A curious case is when a non-compact operator is involved in the modelling. The reason for this is that any linear operator with finite dimensional range (in particular discrete operators used for computations) is necessarily compact. On the other hand, whenever a non-compact operator is paired with a compact operator, the composition is compact again. It is natural to ask whether the non-compact part can ``destroy" important properties of the compact one. We will investigate this in terms of the singular values.

The background of this note is the approximate solution of inverse problems. Let $X$ and in particular $Y$ be infinite dimensional Hilbert spaces, and $A:X\rightarrow Y$ a bounded linear operator. An inverse problem, in the most basic definition, is to recover the cause $x^\dag\in X$ from data $y=Ax^\dag$, where often only a noisy approximation $y^\delta$ to $y$ is available. Such problems become challenging when the operator $A$ is ill-posed, as then $A$ may not be invertible, and the pseudoinverse $A^{-1}$ is discontinuous, meaning arbitrarily small measurement errors can lead to arbitrary errors in the reconstruction of $x^\dag$. A linear operator is ill-posed if and only if its range is not closed, $\cR(A)\neq\overline{\cR(A)}$ \cite{EHN}. Nasheds definition of ill-posedness \cite{Nashed} further distinguishes between ill-posedness of type I, where $\cR(A)$ contains an infinite dimensional closed subspace, and ill-posedness of type II, where this is not the case. In the setting where $X$ and $Y$ are Hilbert-spaces, ill-posedness of type II occurs if and only if $A$ is compact, and the theory for the inversion of compact linear operators has been treated abundantly in the literature; see, e.g., \cite{EHN}. The degree of ill-posedness, which is of high importance for the design of appropriate regularization schemes to find reasonably good approximations to $x^\dag$, is characterized by the decay of the singular values $\sigma_i(A)$ of $A$. Denoting by $A^\ast:Y\rightarrow X$ the adjoint of $A$, $\sigma_i$ is a singular value of $A$ if $\sigma_i^2$ is an eigenvalue of $A^\ast A$. For compact operators, the non-increasing singular values are bounded from above with $\sigma_1(A)=\|A\|$, are countable and can only accumulate in zero. The singular values and, more generally, the singular system $\{\sigma_i,u_i,v_i\}$, where $u_i$ is an ONB for $\cR(A)$ and $v_i$ is an ONB for $\cR(A^\ast)$ is closely related to the singular value decomposition $\hat A=USV^T$ of a matrix-approximation to $A$. 

In the Hilbert-space setting, it follows that ill-posedness of type I corresponds to non-compact operators with non-closed range. It appears that generally inverse problems with such an operator are considered less ill-posed in the literature \cite{Nashed}, but since the operators do not possess a singular system, it is difficult to formulate an analogon to the degree of ill-posedness. However, if
\[
A=B\circ J
\]
is a composition between a non-compact operator $B$ and a compact operator $J$, then $A$ is also compact. This leads us to the main question motivating this note: Is the degree of ill-posedness of $A$ equal or at least similar to the one of $J$, or can the application of the non-compact $B$ yield a (significantly) more ill-posed problem? 

We will only consider concrete choices of the operators. Our main interest for the non-compact operator is the Hausdorff-moment (HM) operator \cite{Hausdorff23}
\begin{equation}\label{eq:hausdorff}
B^H: L^2[0,1]\rightarrow \ell^2,\quad [B^Hx]_i=\int_0^1 t^{i-1} x(t)\,dt.
\end{equation}
Since the compact part is of less interest, we choose for simplicity the integral operator
\begin{equation}\label{eq:int}
J: L^2[0,1]\rightarrow L^2[0,1],\quad [Jx](s)=\int_0^s x(t)\,dt.
\end{equation}
Note that the singular system of $J$ is well-known \cite{EHN}, in particular the singular values fall as $\sigma_i(J)\sim 1/i$. Finally, since the HM operator is sometimes difficult to handle, we will use the multiplication operator 
\begin{equation}\label{eq:mult}
B^M:L^2[0,1]\rightarrow L^2[0,1], \quad [B^Mx](s)=m(s)x(s),
\end{equation}
with multiplicator function $m(s)$ containing essential zeros in $[0,1]$, to draw analogies. 

Recently, the paper \cite{GHHK21} provided some new insight on the HM-operator in relation to inverse problems. Even more, Math\'{e} and Hofmann discussed our main question in \cite{HM} from an analytical point of view. They showed that there are positive constants $\underline c$, $\bar c$ such that
\begin{equation}\label{eq:result_MH}
\exp(-\underline c i)\leq \sigma_i(B^H\circ J)\leq \frac{\bar c}{i}.
\end{equation}
This leaves open whether the singular values fall just like the ones of the integration operator, or if the HM operators makes the composition exponentially ill-posed. Math\'{e} and Hofmann then proceed to show that
\begin{equation}\label{eq:sing_HJ_O}
\sigma_i(B^H\circ J)=\mathcal{O}\left(i^{-\frac{3}{2}}\right),
\end{equation}
i.e., the composite operator is at least slightly more ill-posed than $J$ alone, but exponential decay is not ruled out. They also provide a substantial argument that \eqref{eq:sing_HJ_O} is the actual degree of ill-posedness, and that the singular values of $A=B^H\circ J$ can not fall exponentially fast. The reasoning for this is that the operator $A^\ast A$ can be written as Fredholm integral operator
\begin{equation}\label{eq:fredholm}
[A^\ast Ax](s)=\int_0^1 k(s,t) x(t)\,dt
\end{equation}
with kernel
\begin{equation}\label{eq:kernel}
k(s,t)=\sum_{j=1}^\infty \frac{(1-s^j)(1-t^j)}{j^2}, \qquad 0\leq s,t\leq 1.
\end{equation}
While $k$ is continuous, its derivative $\frac{\partial}{\partial s} k(s,t)$ has a pole at $s=1$. In particular, $k$ is not Lipshitz-continuous. It is well-known that for integral operators as in \eqref{eq:fredholm}, exponentially decaying singular values are usually associated with $k\in C^\infty$, i.e., smooth, infinitely many times differentiable kernels. The fact that the smoothness of \eqref{eq:kernel} is highly limited suggests that the singular values of $A$ do not fall exponentially fast, but there seems to be no theorem available in the literature that covers this situation. Therefore, ultimately, the exponential decay seems inlikely but can not be ruled out.

What is missing in the work of Math\'{e} and Hofmann is a mentioning of the fact that numerically, when $A=B^H \circ J$ is approximated by a (finite) matrix $\hat A$, its singular values do fall exponentially. This is somewhat surprising, in particular in comparison to the composition of $J$ with the multiplication operator. In this case, it is well-known \cite{HW05,HW09} that in particular for the multiplicator functions $m(s)=s^\kappa$, $\kappa>0$, (which we will use throughout the paper), the multiplication operator  does not change the degree of ill-posedness,
\begin{equation}\label{eq:svs_BMJ}
\sigma_i(B^M\circ J)\sim 1/i,
\end{equation}
and numerical observations agree with this. We mention here that the operators $B^H$ and $B^M$ have similar properties. Both are non-compact with non-closed range, and both have a purely continuous spectrum; the interval $[0,\sqrt{\pi}]$ for $B^H$ and all values $m(s)$, $0\leq s\leq 1$, for $B^M$. With our choice of $m(s)$ the latter corresponds to the interval $[0,1]$.

In the remainder of this paper, we make an attempt to explain this seeming mismatch. We show numerical experiments for both operators $A=B^H\circ J$, $A=B^M\circ J$, in Section \ref{sec:numerics}. After that, we present a reasoning that allows all statements on the singular values made so far to be true without a contradiction.

\section{Numerical singular values}\label{sec:numerics}
Since the theoretical approach to the estimation of the singular values of the composite operator $A=B^{(H)}\circ J$ in \cite{HM} was not entirely conclusive we study in this section the computed singular values of the linear systems obtain by discretizing the infinite dimensional problems. We are mainly interested in the singular values of the composition $A=B^H\circ J$ with the Hausdorff operator $B^H$, but we also report on the singular values of the composition $\tilde A=B^M\circ J$ with a multiplication operator. 

To discretize the operators we need to approximate integrals of type 
\begin{equation}\label{eq:int_prototype}
[I_kx](s)=\int_0^1 k(s,t)x(t)\,dt.
\end{equation} 
Namely, we have (with slight abuse of notation for $B^H$)
\begin{align*}
&[Jx](s)= [I_k x](s)\mbox{ with } k(s,t)=\begin{cases} 1 & t\leq s \\ 0 & else\end{cases},\\
&[B^{H}x]_j= [I_kx]_j\mbox{ with } k(j,t)=t^{j-1}.
\end{align*}
It was shown in \cite{HM} that the composition $B^H\circ J$ can be written as
\[
[Ax]_j=[B^{H}\circ J (x)]_j=\int_0^1 \frac{1}{j}(1-t^j)x(t)\,dt
\]
hence
\[
[Ax]_j=[B^{(H)}\circ J]_j=[I_kx]_j\mbox{ with } k(j,t)=\frac{1}{j}(1-t^{j-1}).
\]
We discretize the integrals $I_k$ with the trapezoidal rule, i.e., we use a grid $t_i$, $i=1,\dots,N$, where $N$ is given in the examples, such that $t_i=(i-1)/(N-1)$. Analogously we discretize the variable $s\in[0,1]$ as $s_j$, $j=1,\dots,M$, such that
\begin{equation}\label{eq:int_num}
[I_kx](s_j)=\frac{1}{N-1}\left(\frac{1}{2} k(s_j,t_1)x(t_1)+\sum_{i=2}^{N-1} k(s_j,t_i)x(t_i)+\frac{1}{2} k(s_j,t_N)x(t_N)\right).
\end{equation}
There are integration rules that yield a better rate of approximation of the true integral. However, we consider the trapezoidal rule as a standard implementation, and as we will see it is enough to verify \eqref{eq:svs_BMJ}. For the Hausdorff operator we will use a discretization fine enough to make it unlikely that the choice integration rule alone explains the discrepancies in the singular values. We will refer back to this statement in the next section.

The discretization \eqref{eq:int_num} yields $M\times N$ matrix approximations to the respective operators, which we indicate with a hat symbol, i.e., we have $\hat B^{H}\in \R^{M\times N}$ as approximation to \eqref{eq:hausdorff}, $\hat J\in \R^{M\times N}$ as approximation to \eqref{eq:int}, $\hat A \in \R^{M\times N}$ as approximation to the composition $A=B^{H}\circ J$. Finally we have the multiplication operator $ B^{M}$ from \eqref{eq:mult}, which is approximated through a diagonal matrix $\hat B^{M}$ with entries $\hat B^{M}(i,i)=m(t_i)$, $i=1,\dots,N$.
Note that due to discretization the representations  $\hat B^M$ and $\hat B^{H}$ are compact, whereas the infinite dimensional operators are not. We can therefore calculate the singular values for the discretized operators.

Our first study is on the singular values of the composite operator $\hat{A}$ as discrete approximation to the (compact) composition $A=B^{H}\circ J$. We choose $N=M=10000$ and use MATLAB to compute the first 20 singular values of the matrices, as this is sufficient to convey the message. Figure \ref{fig:svd_AJ} shows that the singular values of $\hat{A}$ decay exponentially fast, as in the half-logarithmic scale exponential decay appears linear. It is curious to note that $\sigma_i(\hat{A})\propto \sigma_i( \hat B^{H})\sigma_i(\hat J)$, since for general operators $A:X\rightarrow Z$, $B: Z\rightarrow Y$ one can only guarantee 
\begin{equation}\label{eq:sv_generals}
\sigma_{2i}(AB)\leq \sigma_i(A)\sigma_i(B),
\end{equation}
see \cite{Pie87}. 
It is also important to mention that in Figure \ref{fig:svd_AJ} we clearly have $\sigma_i(\hat J)\sim 1/i$, so for the compact operator $J$ theory and numerics agree. We conclude from this that numerically, the Hausdorff moment problem, even in composition with the integration operator, is exponentially ill-posed. We remark that this is true, in fact even more pronounced, when we use less than $10000$ discretization points or moments. We will come back to this in Section \ref{sec:explain} below.
\begin{figure}
\includegraphics[width=\linewidth]{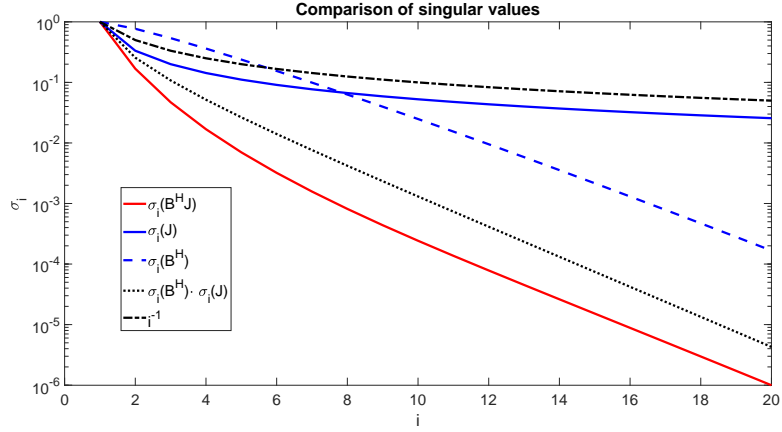}\caption{Singular values $\sigma_i$ of the discrete operators $\hat J$, $\hat{B}^{(H)}$, $\hat{A}$, and reference lines for exponential decay $\sigma_i=\exp(-i)$ and polynomial decay $\sigma_i=i^{-1}$.}\label{fig:svd_AJ}
\end{figure}

As a comparison, we now replace the Hausdorff moment operator with the multiplication operator which is known to preserve the ill-posedness of the integration operator. Figure \ref{fig:svd_MJ} shows that the singular values of the discretized composite operator $\hat A=B^{(M)}\circ J$ do not fall exponentially, but stay close to the rate $i^{-1}$ known from the theory. In this example we can calculate more singular values, and doing so we clearly see $\sigma_i(\hat A)\sim \sigma_i(\hat J)=1/i$. This result has been reported before in \cite{Freitag05}, but the contrast to the HM-operator is still surprising. Before we make an attempt at an explanation for this discrepancy in the next section, we proceed with another numerical experiment.

\begin{figure}
\includegraphics[width=\linewidth]{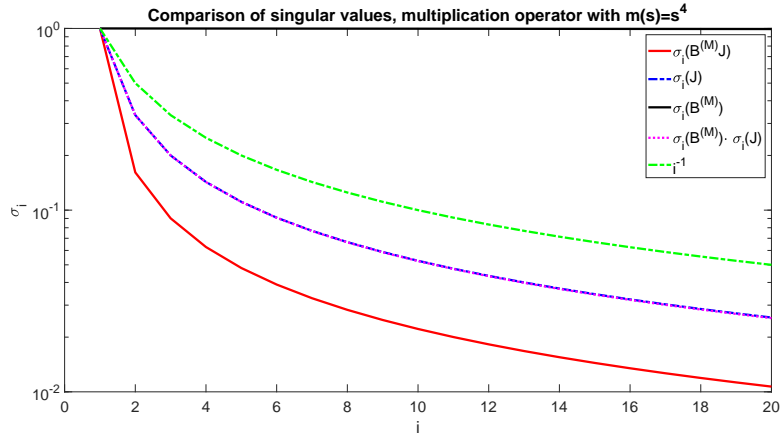}\caption{Singular values $\sigma_i$ of the discrete operators $\hat J$, $\hat{B}^{(M)}$ with $m(s)=s^4$, $\hat{A}$ as discretization of $A=B^{(M)}\circ J$, and reference line for polynomial decay $\sigma_i=i^{-1}$.}\label{fig:svd_MJ}
\end{figure}

The second indicator for the estimation of the decay of the singular values in \cite{HM} was the smoothness of the kernel $k(s,t)$ from \eqref{eq:kernel} and the singular values of the associated integral operator $[A^\ast Ax](s)=\int_0^1 k(s,t)x(t)\,dt$. We discretize this as special case of the integral $I_k[x](s)$ from \eqref{eq:int_prototype} with $M=N=5000$ supporting point for $s$ and $t$, respectively. As cut-off for the summation in the kernel we chose several values $j_{max}$ up to $j_{max}=200000$. The results are given in Figure \ref{fig:svd_AA}. We see that the singular values $\sigma_i$ increase slowly with increasing $j$, but even with $j_{max}$ they fall exponentially fast. For comparison we also plotted the singular values $\sigma_i(\hat A)^2$ where $\sigma_i(\hat A)$ are directly taken from Figure \ref{fig:svd_AJ}. From theory it is known that $\sigma_i(A)^2=\sigma_i(A^\ast A)$, and we indeed obtain a reasonable fit. We also plotted the theoretical value $\sigma(A^\ast A)\sim i^{-3}$ from \cite{HM}, which is clearly not met.

\begin{figure}
\includegraphics[width=\linewidth]{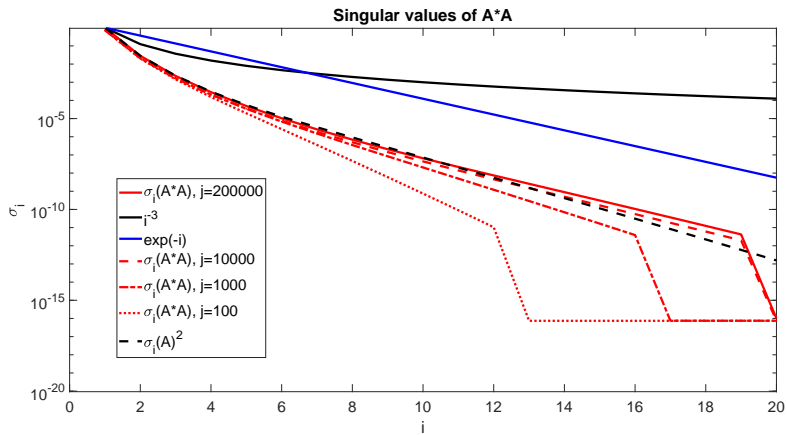}\caption{Computed singular values of $A^*A$ with kernel \eqref{eq:kernel} with truncation indices $j_{max}\in \{100,1000,10000,200000\}$. The singular values increase with $j_{max}$, although the difference between $j_{max}=10000$ and $j_{max}=200000$ is almost negligible. We can not observe the decay values $\sigma_i\sim i^{-3}$ we would expect. However, we have a good fit to the squared singular values $\sigma_i(A)^2$ previously computed and shown in Figure \ref{fig:svd_AJ}. Note that, numerically, $\mathrm{rank}(A^*A)$ is between $13$ and $20$ here.} \label{fig:svd_AA}
\end{figure}

As we have seen, all numerical experiments, even with a fairly large amount of discretization points, indicate an exponential decay of the singular values of the discretized composite operator $A=B^{(H)}\circ J$. Whether this is due to the singular values decaying exponentially fast also in the infinite setting, or due to numerical effects can still not be answered with certainty. Below we present an argument that suggests the latter and might reconcile theory and numerics.

\section{Explaining the seeming inconsistency}\label{sec:explain}
The focus in this section is on the singular values of the discrete approximations to the non-compact operators $B^H$ and $B^M$.

 For the Hausdorff-operator $B^H$ we recall from \cite{GHHK21} that $B^H$ can be seen as a Cholesky-factor of the Hilbert matrix, as it was shown that $B^H=L Q$, where $Q:L^2[0,1]\rightarrow \ell^2$ is an isometry and $L$ is triangular with $LL^T=H$ where $H$ is the Hilbert matrix,
\[
H_{ij}=\left(\frac{1}{i+j-1}\right)_{i,j=1}^\infty.
\]
For an explicit formula for the entries $L_{ij}$ we refer to \cite{GHHK21}.
The Hilbert matrix $H$ is, as infinite dimensional matrix mapping from $\ell^2$ to $\ell^2$, non compact with purely continuous spectrum $[0,\pi]$ \cite{Magnus50}. In the discrete setting, we truncate $H$ and $L$  to the first $n$ rows and donate these objects by $H_n$ and $L_n$. Note that these operators are integral-free, and $\sigma_i(H_n)=\sigma_i(L_n)^2$. This is also a main reason why believe the integration method used in the previous section is not crucial for the determination of the singular values, and that the behaviour of the numerical singular values is deeper rooted in the operator $B^H$ and its approximations $L_n$.

A big advantage of the identities above is that we can use results on the rather well-studied Hilbert matrix and its truncation, for which have good knowledge of the smallest and largest singular values, but lack details in between. More precisely, we have $\|H\|_{\ell^2\rightarrow \ell^2}=\pi$ \cite{Tal87}, and $H_n$ has, for sufficiently large $n$, eigenvalues arbitrarily close to $\pi$. This follows by setting $x_r=\frac{1}{\sqrt{r}}$, $r=1,\dots,N$ and $x_r=0$ for $r>N$, and observing $\langle Hx_r,x_r\rangle_{\ell^2\times\ell^2}/\|x_r\|^2\rightarrow \pi$ as $N\rightarrow \infty$ \cite[Theorem 323 and following pages]{polya}. As consequence, it follows $\|H_n\|\rightarrow \pi$ as $n\rightarrow \infty$, and $\sigma_1(H_n)\rightarrow \pi$. It is also well-known that the smallest eigenvalue of the truncated Hilbertmatrix $H_n$ is $\sigma_n(H_n)\sim \exp(-cn)$, see, e.g., \cite{Tal87}. However, a precise characterization of the intermediate singular values seems to be missing. A valuable hint however can be found in \cite{Beckermann}. There it was shown that 
\begin{equation}\label{eq:beckermann}
\sigma_{i+1}(H_n)\leq 4\left[ \exp\left(\frac{\pi^2}{2\log(8n-4)} \right)\right]^{-2i}\sigma_1(H_n), \qquad 1\leq i\leq n-1.
\end{equation}
Since we could not find a lower bound on $\sigma_{i+1}(H_n)$, the following arguments involve some speculation, but another comparison to the multiplication operator at the end of the section makes them appear reasonable. First, note that $\sigma_1(H_n)\leq\pi$ for all $n$, so this factor is not of interest.
Next, we remark that for fixed $n$, \eqref{eq:beckermann} yields exponential decay of the singular values of $H_n$. Because of \eqref{eq:sv_generals}, we have
\begin{equation}\label{eq:svs:bhj_discfrete}
\sigma_{2i}(\hat B^H \hat J)\leq \sigma_i(L_n)\sigma_i(\hat J)\leq 2\left[ \exp\left(\frac{\pi^2}{2\log(8n-4)} \right)\right]^{-i}\sqrt{\pi} \frac{1}{i}
\end{equation}
This shows that the singular values of the discrete approximation to $B^H\circ J$ fall exponentially and supports our claim that the results of our numerical experiment in Section \ref{sec:numerics} is correct and not due to incorrect approximation of the integrals.

The upper bound in \eqref{eq:beckermann} depends on the truncation index $n$. The term in brackets decreases monotonically with 
\[
\lim_{n\rightarrow \infty} \exp\left(\frac{\pi^2}{2\log(8n-4)} \right)=1.
\]
However, the convergence is excruciatingly slow, compare a plot in Figure \ref{fig:beckermann_expterm}. For $n=10^{30}$ the term has the value 1.072.  
\begin{figure}
\includegraphics[width=\linewidth]{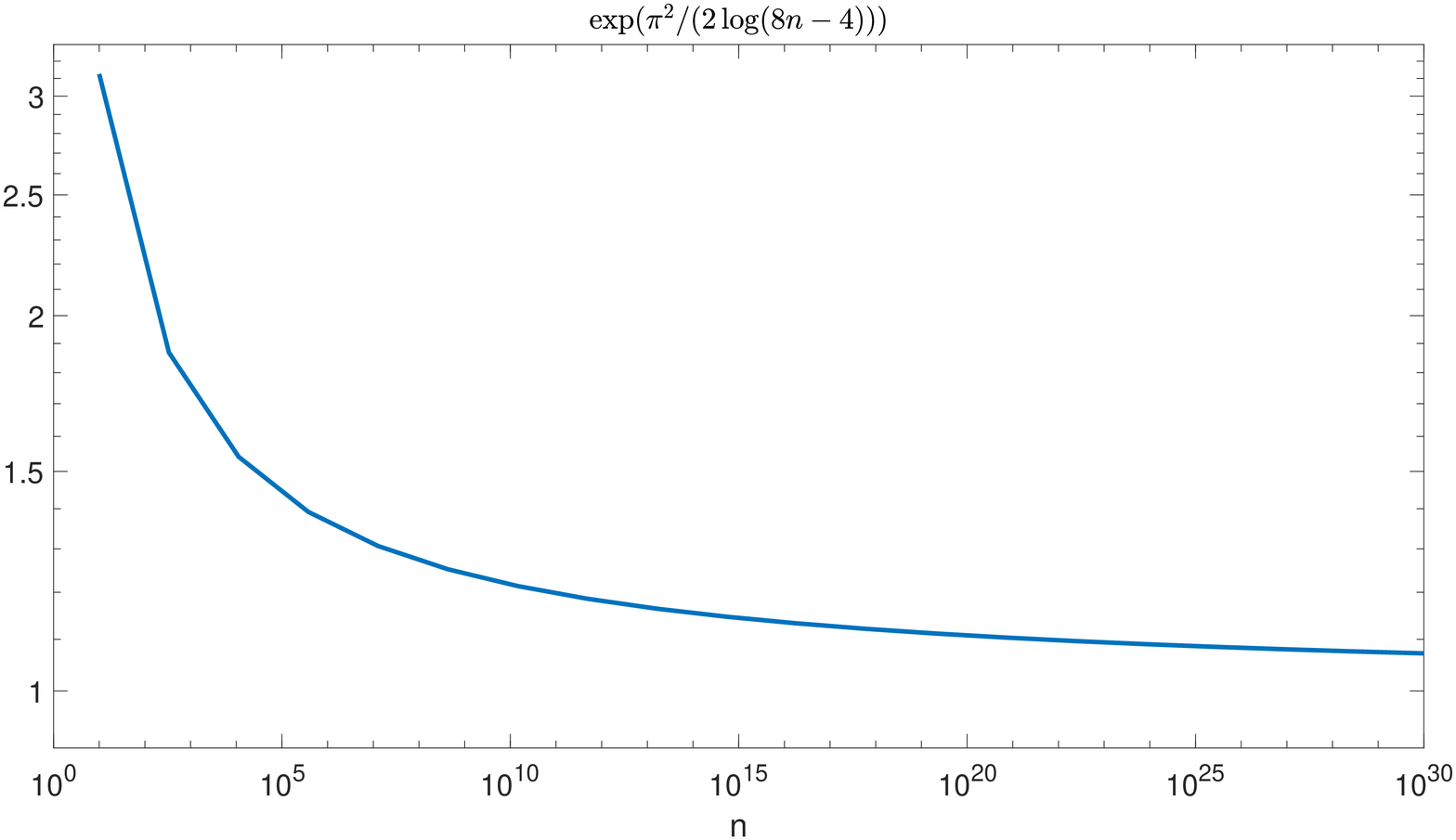}\caption{Plot of the $\exp$-term in \eqref{eq:beckermann} as function of $n$. The convergence is extremely slow.}\label{fig:beckermann_expterm}
\end{figure}

  In the limit $n\rightarrow \infty$, \eqref{eq:beckermann} therefore reduces to $\sigma_{i+1}(H_n)\leq 4\sigma_1(H_n)$, and because $\sigma_i(H_n)\leq \sigma_1(H_n)\leq \|H_n\|\leq \pi$, we even have $\sigma_i(H_n)\leq \pi$. We conjecture that indeed every singular value (with fixed index) convergerges slowly to $\pi$,
\begin{equation}\label{eq:conjecture}
\lim_{n\rightarrow \infty} \sigma_i(H_n)=\pi.
\end{equation}
Numerically, we see that for each index $i$ the sigular values $\sigma_i(H_n)$ as functions of $n$ are increasing, see Figure \ref{fig:svs_HK}. However, in terms of Figure \ref{fig:beckermann_expterm}, we can not increase the discretization enough, since no computer can hold such a matrix. Nevertheless, the increase of the singular values we can observe adds plausibility to \eqref{eq:conjecture}.
\begin{figure}
\includegraphics[width=\linewidth]{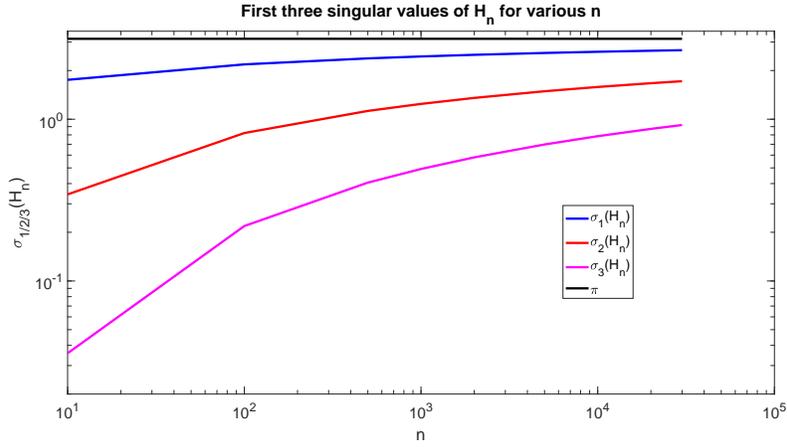}\caption{Singular values of the truncated Hilbert matrix various values of $n$. As $n$ increases, each singular value increases.}\label{fig:svs_HK}
\end{figure}

Below we will prove a result analogous to \eqref{eq:conjecture} for the multiplication operator. Before that, we discuss further consequences of \eqref{eq:beckermann}. In the limit $n\rightarrow \infty$ we obtain from \eqref{eq:svs:bhj_discfrete} the same upper bound for the discrete singular values as for the infinite dimensional ones, compare $\sigma_i(\hat B^H \hat J)\sim 1/i$ as limit of \eqref{eq:svs:bhj_discfrete} and the right-hand side of \eqref{eq:result_MH}. This means that the numerical experiments do not invalidate the theory of \cite{HM}, despite the huge gap. In fact, one can see numerically that, similarly to the ones of the truncated Hilbert matrix in Figure \ref{fig:svs_HK}, the singular values of $\hat A$ increase (again very slowly) when the discretization is refined, see Figure \ref{fig:svd_M}

\begin{figure}
\includegraphics[width=\linewidth]{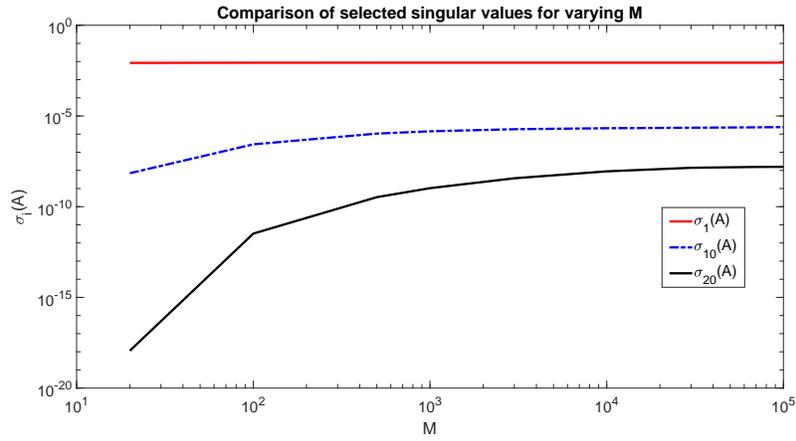}\caption{Singular values $\sigma_1(\hat A),\sigma_{10}(\hat A))$ and $\sigma_{20}(\hat A)$ as function of the maximal moment discretization $M$. It appears that the singular values converge with increasing $M$. In particular, numerically they do not increase to approximate the rate $\sigma_i\sim i^{-1.5}$.}\label{fig:svd_M}
\end{figure}

We close this section by checking analogies of the above results for the multiplication operator.  As mentioned before, the spectrum of $B^{M}$ consists of all values of the multiplicator function $m(s)$, $0\leq s\leq 1$. Due to the simple structure of the multiplication operator, we can prove that each singular value of its discrete approximation converges to a specific value, so the analogon of the conjecture \eqref{eq:conjecture} is true for $B^M$.
\begin{lemma}
Let $m(s)\in L^\infty[0,1]$ be the multiplicator function of the operator $B^{(M)}$ from \eqref{eq:mult} with $m_{max}:=\mathrm{ess\,sup}_{s\in[0,1]} m(s)<\infty$. Let $s_k=\frac{k-1}{K-1}$, $k=1,\dots,K$, $K\in \N$ be discretizations of the interval $[0,1]$ in $K$ points. Then the diagonal matrices $M_K$ with entries $M_K(k,k)=\tilde m(s_k)$, where $\tilde m(s_k)$ is the decreasing rearrangement of $m(s_k)$, and zero outside the diagonal are a discrete approximation to $B^{(M)}$ and 
\[
\lim_{K\rightarrow \infty} \sigma_i(M_K)= m_{max}.
\]
for each $i\in N$.
\end{lemma}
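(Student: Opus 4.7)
The starting observation is that $M_K$ is diagonal, so its singular values are simply the diagonal entries sorted in decreasing order: $\sigma_i(M_K) = \tilde m(s_i)$, the $i$-th largest value of $\{m(s_1),\dots,m(s_K)\}$ (or of $|m(s_k)|$ if one wishes to accommodate sign changes). Hence the lemma reduces to an order-statistics statement about the samples $m(s_k)$ on an equidistant grid.

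For the upper bound, each sample satisfies $m(s_k)\leq m_{\max}$ (using a representative of $m$ bounded everywhere by its essential supremum, which is harmless and is implicit in writing down $m(s_k)$), so $\sigma_i(M_K)\leq m_{\max}$ for all $K$ and $i$. For the matching lower bound I would fix $\varepsilon>0$ and consider the superlevel set
\[
S_\varepsilon=\{s\in[0,1]:m(s)>m_{\max}-\varepsilon\}.
\]
By definition of the essential supremum, $S_\varepsilon$ has positive Lebesgue measure $\mu(S_\varepsilon)>0$. The equidistant grid $s_k=(k-1)/(K-1)$ equidistributes on $[0,1]$, so the number $N_K(\varepsilon)$ of grid points falling into $S_\varepsilon$ satisfies $N_K(\varepsilon)/K\to\mu(S_\varepsilon)$ as $K\to\infty$ (for continuous $m$ this is immediate; in the $L^\infty$ case one approximates $\mathbf{1}_{S_\varepsilon}$ by continuous functions and uses a standard Riemann-sum argument, possibly after passing to a suitable representative of $m$). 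In particular, for any fixed $i\in\N$ one has $N_K(\varepsilon)\geq i$ for all sufficiently large $K$.

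Once at least $i$ grid points lie inside $S_\varepsilon$, the $i$-th largest value of $m(s_k)$ exceeds $m_{\max}-\varepsilon$, i.e.\ $\sigma_i(M_K)\geq m_{\max}-\varepsilon$. Combining this with the upper bound gives
\[
m_{\max}-\varepsilon\leq \liminf_{K\to\infty}\sigma_i(M_K)\leq \limsup_{K\to\infty}\sigma_i(M_K)\leq m_{\max},
\]
and letting $\varepsilon\to 0$ yields the claim. The only delicate point is the equidistribution statement for merely $L^\infty$ multiplicator functions, where pointwise values of $m$ are not canonically defined; for the continuous examples $m(s)=s^\kappa$ used throughout the paper this step is immediate, and in general it is handled by choosing a pointwise representative and invoking the standard convergence of Riemann sums for $\mathbf{1}_{S_\varepsilon}$. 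Everything else is a one-line consequence of $M_K$ being diagonal.
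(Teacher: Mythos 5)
Your proposal is correct and reaches the same conclusion, but by a genuinely different route than the paper. The paper's proof is a two-line sketch: it notes that the singular values of a diagonal matrix are the decreasing rearrangement of its diagonal, assumes w.l.o.g.\ that $m$ itself is decreasing (so that sampling and rearranging commute and $\sigma_i(M_K)=m(s_i)$ with $s_i=(i-1)/(K-1)\to 0$), and concludes from the refinement of the grid that $m(s_i)>m_{\max}-\epsilon$ for $K$ large. You instead keep $m$ as it is and argue via the superlevel set $S_\varepsilon=\{s: m(s)>m_{\max}-\varepsilon\}$, which has positive measure by definition of the essential supremum, and then count grid points landing in $S_\varepsilon$. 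Your route buys two things: it makes explicit where the essential supremum enters (the paper's ``w.l.o.g.\ decreasing'' quietly replaces $m$ by a rearrangement, which changes the grid samples and is not entirely innocent), and it isolates the one genuinely delicate step, namely that enough equidistant grid points must eventually fall into $S_\varepsilon$. That step is also the shared weak spot of both arguments: for a merely measurable $m$ the set $S_\varepsilon$ can have positive measure yet contain no rational grid point at all (e.g.\ a fat Cantor set avoiding the rationals), in which case the claimed limit fails for that representative; so the lemma really needs $m$ continuous, or a.e.\ continuous, or monotone as the paper tacitly assumes. You flag this caveat explicitly, which is more honest than the paper's proof; for the concrete choices $m(s)=s^\kappa$ used throughout, both arguments go through without issue.
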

\begin{proof}
Because $M_K$ is diagonal its singular values are the decreasing rearrangement of the diagonal entries. W.l.o.g. we consider $m(s)$ to be decreasing further. Because $s_k-s_{k-1}\rightarrow 0$ as $K$ increases, there is $\bar K=K(i,\epsilon)$ such that for each fixed $i$ and arbitrary but fixed $\epsilon>0$ $\sigma_i(M_K)=m(s_i)>m_{max}-\epsilon$ for $K>\bar K$.
\end{proof}

Numerically this can be confirmed easily. Since the simgular values of the multiplication operator are trivial, we can plot them all. We have done so in Figure \ref{fig:svs_MK} for varying $K$. For each fixed index $i$ we see $\sigma_i(M_K)\rightarrow 1$ as $K$ increases. The relation \eqref{eq:sv_generals} hence yields the bound $\sigma_{2i}(\hat B^M \circ \hat J)\leq \sigma_i(\hat B^M)\sigma_i(\hat J)\sim i$ when $B^M$ is discretised fine enough.
\begin{figure}
\includegraphics[width=\linewidth]{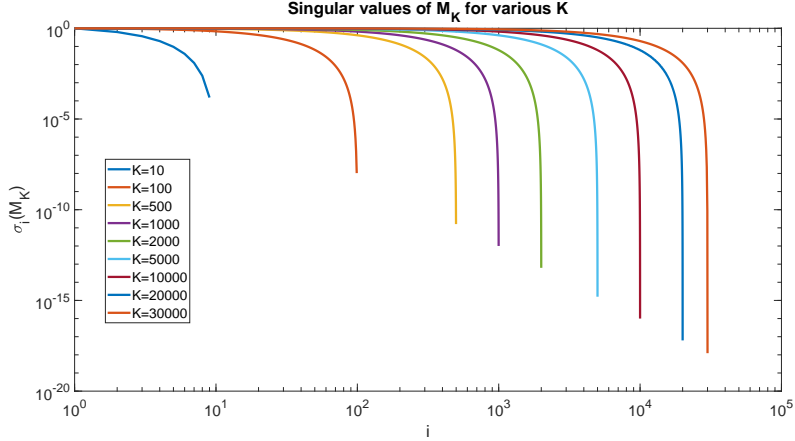}\caption{Singular values of the discrete multiplication operator with $m(s)=s^4$ for various values of $K$. As $K$ increases, each singular value converges to $1$.}\label{fig:svs_MK}
\end{figure}

To summarize this section and conclude the paper, we have demonstrated the possibility that the numerically observed exponentially decaying singular values of the composition $A=B^H\circ J$ can be explained through the inadequate numerical convergence of the spectrum of the discrete approximations in terms of discretization level. Our results suggest that a discretization level that is impossible to realize one might see singular values dropping only (approximately) polynomially fast. In this sense, the results of \cite{HM} do not contradict our experiments in Section \ref{sec:numerics}.

\section*{Acknowledgment}
Daniel Gerth has been supported in parts by the German Science Foundation (DFG) under the grant GE~3171/1-1 (Project No.~416552794). The author thanks Bernd Hofmann (TU Chemnitz) for the discussions and encouraging the writing of this paper.

\end{document}